\documentclass[reqno,a4paper, 11pt]{amsart}

\numberwithin{equation}{section}

\usepackage{amsfonts,amssymb,amsmath,amsthm}
\usepackage{enumerate}
\usepackage{bbm}
\usepackage{times}
\usepackage{amsfonts}
\usepackage{amssymb}
\usepackage{bbm}
\usepackage{times}
\usepackage{amsfonts}
\usepackage{amssymb}
\usepackage{bbm}
\usepackage{times}

\usepackage{enumerate}
\usepackage{amsmath}
\usepackage{amsthm}
\usepackage{color}

\bibliographystyle{unsrt}

\newtheorem{theor}{Theorem}[section]

\newtheorem{prop}[theor]{Proposition}

\newcounter{other}            % Questions get letters
       % Others get letters
\newtheorem{otherth}[other]{Theorem}              % Other papers' theorems
% Other papers'propositions
\newtheorem{otherl}[other]{ Lemma}        % Other papers' lemmas

\def \B{\mathcal B}
\def \Q{\mathcal Q}
\def \Cu{\mathcal{C}_\mu}
\def \D{\mathbb{D}}
\def \c{\mathbb{C}}

\def \T{\mathbb{T}}

\def \C {\mathcal C}

\begin{document}
\title[A Ces\`aro-like operator from Besov spaces  to some spaces ]
{A Ces\`aro-like operator from Besov spaces  to some spaces of analytic functions}

\author{Fangmei Sun, Fangqin Ye, and Liuchang Zhou}
\address{Fangmei Sun\\
    Fudan University\\
    Shanghai 200433, China}
\email{fm\_sun@fudan.edu.cn}

\address{Fangqin Ye\\
   Shantou University\\
    Shantou 515063, China}
\email{fqye@stu.edu.cn}

\address{Liuchang Zhou\\
    Shantou University\\
    Shantou 515063, China}
\email{20lczhou@stu.edu.cn}

\thanks{The work  was supported by National Natural Science Foundation  of China (No. 12001352 and No. 12271328) and Guangdong Basic and Applied Basic Research Foundation (No. 2022A1515012117) }
\subjclass[2010]{47B38,  30H25, 30H30}
\keywords{Ces\`aro-like operator, logarithmic Carleson measure, Besov space, the mean Lipschitz space, Bloch space}

\begin{abstract}
In this paper, for $p>1$ and $s>1$,  we give a complete description of the boundedness and compactness of a Ces\`aro-like operator from the Besov space $B_p$ into a Banach space $X$ between the mean Lipschitz space $\Lambda^s_{1/s}$ and the  Bloch space.
\end{abstract}

\maketitle

\section{Introduction}

Let  $\mu$  be  a finite positive Borel measure  on  $[0, 1)$. For a nonnegative integer $n$, denote by $\mu_n$  the moment of order $n$ of $\mu$. More precisely,
$$
\mu_n=\int_{[0, 1)} t^{n}\mathrm{d}\mu(t).
$$
Let $f(z)=\sum_{n=0}^\infty a_nz^n$ belong to $H(\D)$, the space of functions analytic in the open unit disk $\D$ of the complex plane $\mathbb{C}$.
A Ces\`aro-like operator $\Cu$ is given by
$$
\Cu (f)(z)=\sum^\infty_{n=0}\left(\mu_n\sum^n_{k=0}a_k\right)z^n, \quad z\in\D.
$$
When $\mathrm{d}\mu(t)=\mathrm{d}t$, then $\Cu$ is the classical Ces\`aro operator $\C$ (see \cite{H}).
In  \cite{GGM, JT}, the authors initiated the study of the operator $\Cu$ acting on spaces of analytic functions. Very recently,   the operator $\Cu$  has  attracted a lot of attention (cf. \cite{BSW, Bla1, GGMM, GGM1, TZ}).

For $p>1$, let $B_p$ be  the Besov space consisting of those  functions $f$ in $H(\D)$ such that
$$
\|f\|_{B_p}=|f(0)|+\left(\int_{\D}|f'(z)|^p(1-|z|^2)^{p-2}\mathrm{d}A(z)\right)^{\frac1p}<\infty,
$$
where $\mathrm{d}A$ is the area measure on $\c$ normalized so that $A(\D)=1$.
It is well known that $B_2$ is the Dirichlet space $\mathcal{D}$. Denote by $\B$ the Bloch space  of  functions $f\in H(\D)$ satisfying
$$
\|f\|_\B=|f(0)|+\sup_{z\in \D}(1-|z|^2)|f'(z)|<\infty.
$$
If  $1<p_1<p_2<\infty$, then  $B_{p_1}\subsetneqq B_{p_2} \subsetneqq \B$. See \cite[Chapter 5]{Zhu} for the theory of these spaces.

For  $1\leq p<\infty$ and $0<\alpha\leq 1$, recall that the mean Lipschitz space $\Lambda^p_\alpha$ is the class  of functions $f$ in $H(\D)$ having a non-tangential limit at   almost everywhere point of the unit circle
$\T$ such that $\omega_p(t, f)=O(t^\alpha)$ as $t\to 0$, where $\omega_p(\cdot, f)$ is the integral modulus of continuity of order $p$ of the function $f(e^{i\theta})$. From \cite[Chapter 5]{D}, $\Lambda^p_\alpha$  is always
contained in the Hardy space $H^p$ and consists of functions $f$ in $H(\D)$ such that
$$
\|f\|_{\Lambda^p_\alpha}=|f(0)|+\sup_{0<r<1}(1-r)^{1-\alpha}M_p(r, f')<\infty,
$$
where
$$
M_p(r, f')=\left(\frac{1}{2\pi}\int_0^{2\pi}|f'(re^{i\theta})|^p
\mathrm{d} \theta\right)^{1/p}.
$$
 If $1<p_1<p_2<\infty$, then $\Lambda^{p_1}_{1/{p_1}}\subsetneqq \Lambda^{p_2}_{1/{p_2}}$.
For $p>1$, it is known (cf. \cite[Theorem 2.5]{BSS}) that   $\Lambda^p_{1/p}$ is a proper subset of the Bloch space $\B$.

Carleosn type measures are key tools in the study of  function spaces and related operator theory. Let $I$ be an arc of $\T$ with arclength $|I|$ normalized  so  that $|\T|=1$. Denote by  $S(I)$ the  Carleson box, namely,
$$
S(I)=\{r\zeta \in \D: 1-|I|<r<1, \ \zeta\in I\}.
$$
By \cite{Zhao}, for $0\leq \alpha<\infty$ and $0<s<\infty$, a finite  positive Borel measure $\nu$ on $\D$ is said to be an  $\alpha$-logarithmic  $s$-Carleson measure   if
$$
\sup_{I\subseteq\T}\frac{\nu(S(I))(\log \frac{e}{|I|})^{\alpha}}{|I|^s}<\infty,
$$
and a finite positive Borel measure $\nu$ on $\D$ is called  a vanishing   $\alpha$-logarithmic  $s$-Carleson measure   if
$$
\lim_{|I|\to 0}\frac{\nu(S(I))(\log \frac{e}{|I|})^{\alpha}}{|I|^s}=0.
$$
For $\alpha=0$, $\alpha$-logarithmic  $s$-Carleson measures and vanishing   $\alpha$-logarithmic  $s$-Carleson measures are said to be $s$-Carleson measures and vanishing  $s$-Carleson measures (cf. \cite{D1}), respectively.

A finite positive Borel measure $\nu$ on [0, 1) can be viewed  as a  measure on $\D$ by identifying it
with the measure $\tilde{\nu}$ given  by
$$
\tilde{\nu}(E)=\nu(E \cap [0, 1)),
$$
for each  Borel subset $E$ of $\D$. Consequently,  $\nu$ is an  $\alpha$-logarithmic $s$-Carleson measure on $[0,1)$ if
$$
\sup_{t\in [0, 1)}\frac{\nu([t, 1))(\log\frac{e}{1-t})^{\alpha}}{(1-t)^s}<\infty,
$$
 and $\nu$ is a vanishing   $\alpha$-logarithmic $s$-Carleson measure on $[0,1)$ if
$$
\lim_{t\to 1^-}\frac{\nu([t, 1))(\log\frac{e}{1-t})^{\alpha}}{(1-t)^s}=0.
$$

In \cite{GGMM}, the authors gave a series of interesting results on operators induced by radial measures acting on some spaces of analytic functions.
From \cite[p. 14]{GGMM}, the Ces\`aro operator $\C$  is not bounded from the Dirichlet space $\mathcal D$ to the Bloch space $\B$. Then it is quite natural to describe the measures $\mu$ such that $\Cu$ is bounded from $\mathcal D$ to $\B$. The following conclusion is Theorem 7 in \cite{GGMM}.

\begin{otherth}\label{GGMMTh7}
Let $X$ be a Banach space of analytic functions in $\D$ with $\Lambda^2_{1/2}\subseteq X \subseteq \B$ and let $\mu$ be a finite positive Borel measure on
$[0, 1)$.
\begin{enumerate}
  \item [(i)] If $\mu$ is a $\frac 12$-logarithmic $1$-Carleson measure, then $\Cu$ is a bounded operator from $\mathcal D$ into $X$.
  \item [(ii)] If $\Cu$ is a bounded operator from $\mathcal D$ into $X$ and $0<\beta<\frac 12$, then $\mu$ is a $\beta$-logarithmic $1$-Carleson measure.
\end{enumerate}
\end{otherth}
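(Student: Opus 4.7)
The key starting point for both parts is the integral representation
$$
\Cu(f)(z)=\int_{[0,1)}\frac{f(tz)}{1-tz}\,d\mu(t),\qquad z\in\D,
$$
obtained by expanding $f(tz)$ and $(1-tz)^{-1}$ in power series and reading off the moments of $\mu$. Differentiating under the integral,
$$
(\Cu f)'(z)=\int_{[0,1)}\Bigl[\frac{tf'(tz)}{1-tz}+\frac{tf(tz)}{(1-tz)^2}\Bigr]d\mu(t).
$$

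For part (i), the inclusion $\Lambda^2_{1/2}\subseteq X$ reduces the task to proving that $\Cu:\mathcal{D}\to\Lambda^2_{1/2}$ is bounded, i.e.\ $(1-r)^{1/2}M_2(r,(\Cu f)')\lesssim\|f\|_{\mathcal{D}}$ uniformly in $r\in(0,1)$. I would apply Minkowski's integral inequality to bring the $M_2$-norm inside the $\mu$-integral and then estimate each summand separately. For the dominant second summand, combining the Dirichlet point-evaluation bound $|f(w)|\leq C\|f\|_{\mathcal{D}}\sqrt{\log(e/(1-|w|^2))}$ with the standard identity $\int_0^{2\pi}|1-tre^{i\theta}|^{-4}\,d\theta\asymp(1-tr)^{-3}$ yields
$$
M_2\Bigl(r,\frac{tf(tz)}{(1-tz)^2}\Bigr)\leq C\|f\|_{\mathcal{D}}\,\frac{\sqrt{\log(e/(1-tr))}}{(1-tr)^{3/2}}.
$$
Multiplying by $(1-r)^{1/2}$ and integrating against $\mu$, the $\tfrac12$-logarithmic $1$-Carleson assumption $\mu([s,1))\lesssim(1-s)/\sqrt{\log(e/(1-s))}$ is precisely what is needed---via a layer-cake or integration-by-parts argument against $\mu([t,1))$---to make the resulting $\mu$-integral uniformly bounded in $r$. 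The first summand admits a parallel but easier treatment, since $f'$ does not carry the logarithmic pointwise growth of $f$.

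For part (ii), I would test on the one-parameter family $f_a(z)=\log\frac{1}{1-az}$, $a\in(0,1)$, which lies in $\mathcal{D}$ with $\|f_a\|_{\mathcal{D}}^2=\log\frac{1}{1-a^2}\asymp\log\frac{1}{1-a}$ by a direct polar-coordinate calculation. Since $\Cu:\mathcal{D}\to X\subseteq\B$ is bounded, evaluating the Bloch seminorm of $\Cu f_a$ at the point $z=a$ gives $(1-a^2)|(\Cu f_a)'(a)|\leq C\sqrt{\log\frac{1}{1-a}}$. On the other hand, restricting the integral formula for $(\Cu f_a)'(a)$ to $t\in[a,1)$---where both $1-at$ and $1-a^2t$ are comparable to $1-a$---the positive second summand alone produces the lower bound $(\Cu f_a)'(a)\gtrsim\mu([a,1))\log(1/(1-a))/(1-a)^2$ for $a$ close to $1$. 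Combining the two estimates yields $\mu([a,1))\lesssim(1-a)/\sqrt{\log(1/(1-a))}$, which in particular implies the $\beta$-logarithmic $1$-Carleson condition for every $\beta<\tfrac12$.

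The main obstacle lies in part (i): both the Dirichlet point-evaluation bound and the Carleson hypothesis sit at their borderline exponents, so the $\mu$-integral converges only after an exact cancellation of the $\sqrt{\log}$ coming from $|f(w)|$ against the $1/\sqrt{\log}$ supplied by $\mu([s,1))$. Making this cancellation rigorous---rather than merely heuristic---is the step that demands real care; by contrast, part (ii) is essentially a one-line verification once the right test family is in hand.
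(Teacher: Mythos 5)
Your proposal is correct, and it in fact proves the sharper statement that the $\frac12$-logarithmic $1$-Carleson condition is both sufficient and necessary for boundedness from $\mathcal D=B_2$ into $X$ --- which is exactly what the paper's Theorem \ref{th3.1} establishes for $p=s=2$ (the quoted Theorem \ref{GGMMTh7} itself is only cited from \cite{GGMM}, not reproved, so the relevant comparison is with Theorem \ref{th3.1}). For part (i) you follow essentially the paper's route: the integral representation, Minkowski's inequality, the pointwise bound $|f(w)|\lesssim\|f\|_{\mathcal D}(\log\frac{e}{1-|w|^2})^{1/2}$, and the reduction of the resulting condition
$\sup_{r}(1-r)^{1/2}(\log\frac{e}{1-r})^{1/2}\int_{[0,1)}(1-tr)^{-3/2}\,\mathrm{d}\mu(t)<\infty$
to the Carleson hypothesis; the paper packages that last reduction as Proposition \ref{LemA}, proved by precisely the layer-cake argument you indicate, and it is worth noting (Remark 3) that this equivalence genuinely uses that $\mu$ is supported on $[0,1)$. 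Part (ii) is where you take a genuinely different route. The paper tests on the normalized functions $f_t(z)=(\log\frac{e}{1-t})^{-1/2}\log\frac{1}{1-tz}$, expands $\Cu(f_{N/(N+1)})'$ into Taylor coefficients, derives the moment estimate $\mu_N\lesssim N^{-1}[\log(N+1)]^{-1/2}$ through a chain of coefficient inequalities, and then converts moment decay back into the Carleson condition via Proposition \ref{prop un}. You instead evaluate $(\Cu f_a)'$ at the real point $z=a$, use that both terms of the integrand in the representation formula are positive for $a,t\in(0,1)$ to discard the contribution of $[0,a)$, and read off $\mu([a,1))\lesssim(1-a)(\log\frac{1}{1-a})^{-1/2}$ directly; this is shorter and avoids both the coefficient manipulations and the moment characterization. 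What the paper's longer route buys is the explicit moment bound, which it reuses verbatim (in little-$o$ form) in the subsequent compactness theorem, whereas your argument would need the vanishing analogue of the boundary estimate instead. Both arguments close the gap between the exponents $\frac12$ and $\beta<\frac12$ in the quoted statement.
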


In this paper, motivated by  Theorem \ref{GGMMTh7},  for $p>1$ and $s>1$  we characterize completely
the  boundedness and compactness of the operator $\Cu$ from the Besov space $B_p$ into a Banach space $Y$ between the mean Lipschitz space $\Lambda^s_{1/s}$ and the  Bloch space $\B$.
Consequently, taking  $p=s=2$ in our result, we finish  the gap between condition (i) and condition (ii) in Theorem \ref{GGMMTh7}.

Throughout  this paper, the symbol $A\thickapprox B$ means that $A\lesssim
B\lesssim A$. We say that $A\lesssim B$ if there exists a positive
constant $C$ such that $A\leq CB$.

\section{  Logarithmic Carleson measures  on $[0, 1)$ }

In this section, we give some   characterizations  of logarithmic Carleson measures $\nu$ on $[0, 1)$, which will be useful in next sections. We will see that the interval
 $[0, 1)$ is essential in these characterizations; that is, when $\nu$ is
a finite positive Borel measure on $\D$, it is possible that these characterizations do not hold.

\begin{prop}\label{prop un}
Suppose  $0\leq \alpha<\infty$, $0<s<\infty$,  and $\nu$ is a finite  positive  Borel measure on $[0,1)$.
\begin{enumerate}
 	\item [(i)] $\nu$ is a $\alpha$-logarithmic $s$-Carleson measure if and only if
	$$\nu_n=O\left(\frac{1}{(n+1)^s[\log(n+1)]^{\alpha}}\right).$$
	 \item [(ii)] $\nu$ is a vanishing $\alpha$-logarithmic $s$-Carleson measure if and only if
	 $$\nu_n=o\left(\frac{1}{(n+1)^s[\log(n+1)]^{\alpha}}\right).$$
\end{enumerate}
\end{prop}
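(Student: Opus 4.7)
The plan is to prove each of (i) and (ii) by establishing both implications, the forward one by integration by parts and a beta-integral estimate, the reverse by a pointwise lower bound on $t^n$ near $t=1$.

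For the direction ``Carleson $\Rightarrow$ moment bound,'' a Stieltjes integration by parts, using that $\nu$ is finite so $\nu([t,1))\to 0$ as $t\to 1^-$ and $t^n$ vanishes at $t=0$, gives
\[
\nu_n \;=\; n\int_0^1 t^{n-1}\,\nu([t,1))\,dt, \qquad n\geq 1.
\]
Plugging in $\nu([t,1))\lesssim (1-t)^s/\bigl(\log\frac{e}{1-t}\bigr)^\alpha$ reduces everything to estimating
\[
n\int_0^1 t^{n-1}\,\frac{(1-t)^s}{(\log\frac{e}{1-t})^\alpha}\,dt.
\]
I would split the interval at $t=1-1/\sqrt n$: on the portion near $1$, the logarithmic factor is $\gtrsim \log n$, and $n\,B(n,s+1)\approx n^{-s}$ yields a bound of order $n^{-s}(\log n)^{-\alpha}$; on the complementary part, $nt^{n-1}\leq n(1-1/\sqrt n)^{n-1}$ is super-polynomially small, which is harmless.

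For the reverse direction ``moment bound $\Rightarrow$ Carleson,'' I would exploit the elementary inequality $t^n\geq e^{-1}$ for $t\in[1-1/n,1)$ and $n\geq 2$, yielding
\[
\nu\bigl([1-1/n,1)\bigr)\;\leq\;e\int_{[1-1/n,1)}t^n\,d\nu(t)\;\leq\;e\,\nu_n.
\]
Combining with the moment hypothesis gives the Carleson estimate at the discrete points $1-1/n$, and monotonicity of $t\mapsto\nu([t,1))$ extends it to arbitrary $t$: for $t\in[1-1/n,\,1-1/(n+1))$ one has $1-t\approx 1/n$ and $\log\frac{e}{1-t}\approx\log(n+1)$. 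The vanishing case (ii) follows by running the same two arguments with $o$ in place of $O$: in the forward step, given $\varepsilon>0$ choose $\delta>0$ so that $\nu([t,1))<\varepsilon(1-t)^s/(\log\frac{e}{1-t})^\alpha$ on $(1-\delta,1)$, split $\nu_n$ accordingly, apply the beta-integral estimate on the tail with the extra factor $\varepsilon$, and note that the $[0,1-\delta]$ piece is bounded by $n(1-\delta)^{n-1}\nu([0,1))$, which decays exponentially.

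The main technical point is the logarithmic refinement of the beta-integral estimate. Without the $\log$ factor the identity $n\,B(n,s+1)\approx n^{-s}$ is immediate, but to recover the sharp $(\log n)^{-\alpha}$ one must localize at an intermediate scale like $1-1/\sqrt n$ and exploit that $nt^{n-1}$ behaves as an approximate identity of unit mass concentrated at $t=1$ at scale $1/n$. This same nonnegativity of $t^n$ on $[0,1)$ is what drives the backward direction and is also the reason the equivalence is specific to measures on $[0,1)$ and can fail for general measures on $\D$, as noted in the remark preceding the proposition.
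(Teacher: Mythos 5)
Your proposal is correct, and in the forward direction it takes a genuinely different route from the paper. Both proofs start from the same distribution-function identity $\nu_n=n\int_0^1 t^{n-1}\nu([t,1))\,dt$ and reduce to estimating $n\int_0^1 t^{n-1}(1-t)^s(\log\tfrac{e}{1-t})^{-\alpha}\,dt$; but where you split this single integral at the intermediate scale $1-1/\sqrt n$ (so that the logarithmic factor is $\gtrsim\log n$ on the piece that matters, while the complementary piece is $O(ne^{-\sqrt n/2})$ and hence negligible), the paper instead expands $(1-t)^{-1}(\log\tfrac{e}{1-t})^{-\alpha}$ into the Zygmund-type power series $\sum_k[\log(k+e)]^{-\alpha}t^k$, integrates termwise against beta functions, splits the resulting sum at $k=n$, and invokes the partial-sum asymptotics $\sum_{k=0}^n[\log(k+e)]^{-\alpha}\approx n[\log(n+e)]^{-\alpha}$. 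Your localization argument is more elementary and self-contained, since it avoids both auxiliary facts \eqref{521} and \eqref{522}; the paper's series method has the advantage that those two estimates are reused elsewhere in the literature it builds on. The reverse direction and the treatment of part (ii) (split at $1-\delta$, exponential decay of the inner piece, $\varepsilon$-gain on the outer piece) coincide with what the paper does or omits as routine. One cosmetic slip: the inequality $t^n\ge e^{-1}$ on $[1-1/n,1)$ fails at the left endpoint, since $(1-1/n)^n$ increases to $e^{-1}$ from below; but $(1-1/n)^n\ge 1/4$ for $n\ge 2$, so your conclusion $\nu([1-1/n,1))\lesssim\nu_n$ stands with a different absolute constant, exactly as in the paper.
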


\begin{proof}
(i) \
For $0\leq \alpha<\infty$, it is well known (cf. \cite[p.192]{Z}) that
\begin{equation}\label{521}
\frac{1}{1-x}\Big(\log\frac{e}{1-x}\Big)^{-\alpha}\thickapprox\sum_{k=0}^{\infty}[\log(k+e)]^{-\alpha}x^k
\end{equation}
for all  $x\in[0,1)$,
and
\begin{equation}\label{522}
\sum^{n}_{k=0}[\log(k+e)]^{-\alpha}\thickapprox n [\log(n+e)]^{-\alpha}
\end{equation}
for all  $n\geq1$.  Suppose  $\nu$ is an  $\alpha$-logarithmic $s$-Carleson measure. By a useful formula of  the distribution function (see \cite[p. 20]{Gar}),  (\ref{521}), and  (\ref{522}), we deduce
\begin{align*}
\nu_n=& n\int^1_0 \nu([x,1)) x^{n-1}\mathrm{d}x\\
\lesssim&\  n\int^1_0  x^{n-1}(1-x)^s \Big(\log\frac{e}{1-x}\Big)^{-\alpha}\mathrm{d}x\\
\thickapprox & \ n \sum_{k=0}^{\infty}[\log(k+e)]^{-\alpha}\int^1_0  x^{n+k-1}(1-x)^{s+1}  \mathrm{d}x\\
\thickapprox &\  n \Big(\sum_{k=0}^{n}+\sum_{k=n+1}^{\infty}\Big)[\log(k+e)]^{-\alpha}(n+k)^{-s-2}\\
\lesssim &  \ n^{-s-1}\sum^{n}_{k=0}[\log(k+e)]^{-\alpha}+ n[\log(n+1+e)]^{-\alpha}\sum_{k=n+1}^{\infty}(n+k)^{-s-2}\\
\thickapprox& \  n^{-s}  [\log(n+e)]^{-\alpha}+ n[\log(n+1+e)]^{-\alpha}\int_{n+1}^{\infty}(n+x)^{-s-2} \mathrm{d}x\\
\thickapprox& \frac{1}{n^s[\log(n+1)]^{\alpha}}
\end{align*}
for all $n>1$.  Conversely, let
$$
\nu_n=O\left(\frac{1}{(n+1)^s[\log(n+1)]^{\alpha}}\right).
$$
For $t\in [0, 1)$, there is a positive integer $n$ such that
$$1-\frac 1 n\leq t<1-\frac{1}{n+1}. $$
Then
\begin{align*}
\nu([t, 1))\leq &   \nu([1-\frac1n, 1))\\
\lesssim & \int_{[1-\frac1n, 1)}t^n\mathrm{d}\nu(t)\\
\lesssim & \frac{1}{(n+1)^s[\log(n+1)]^{\alpha}}\\
\lesssim & (1-t)^s \left(\log\frac{e}{1-t}\right)^{-\alpha}.
\end{align*}

(ii) \ The proof is similar to (i) with minor modifications. We omit it.
\end{proof}

\vspace{0.1truecm}
\noindent {\bf Remark 1.}
The case of   $0\leq \alpha<\infty$ and $s\geq 1$  in Proposition \ref{prop un}  was  used in \cite{GGMM, GM1}; see \cite[p. 392]{GM1} and  \cite[Lemma 2.7]{GM1} for details.   Here  we give a complete picture of all $0\leq \alpha<\infty$ and  $0<s<\infty$.
In particular, Proposition \ref{prop un} also generalizes the corresponding result in \cite{BW, CGP} from $\alpha=0$ to all $0\leq \alpha<\infty$.

\vspace{0.1truecm}
\noindent {\bf Remark 2.}  If  $\nu$ is a finite positive Borel measure on $\D$ and we also write
$$
\nu_n=\int_\D w^n d\nu(w), \ \ n=0, 1, 2, \ldots,
$$
it is possible that  the conclusions in  Proposition \ref{prop un} are not true. For instance, when  $\alpha>0$ and $s>1$, set
$\mathrm{d}\nu_s(w)=(1-|w|^2)^{s-2}\mathrm{d}A(w)$. Then  $\nu_s(\D)$ is finite and $\nu_s(S(I))\thickapprox |I|^s$ for all $I$ in $\T$. Hence $\nu_s$ is not a $\alpha$-logarithmic $s$-Carleson measure. But
$$
(\nu_s)_{n}=\int_{\D} w^n (1-|w|^2)^{s-2}\mathrm{d}A(w)=0, \hspace{0.5 cm} n\geq1,
$$
which means that
$$(\nu_s)_{n}=O\left(\frac{1}{(n+1)^s[\log(n+1)]^{\alpha}}\right).$$

The following characterization  of logarithmic Carleson measures  is well known, see Lemma 4.1 and Proposition 1.2 in \cite{Bla}, Theorem 2 in \cite{MZ}, and  Lemma 2.2 in \cite{PZ},    for instance.

\begin{otherl}\label{PZ}
Suppose  $s$, $t>0$, $\alpha\geq0$,  and $\nu$ is  a finite positive  Borel measure on $\D$.
\begin{itemize}
  \item [(i)] $\nu$ is an  $\alpha$-logarithmic $s$-Carleson measure if and only if
$$
\sup_{a\in\D}\Big(\log\frac{e}{1-|a|^2}\Big)^{\alpha}\int_{\D}\frac{(1-|a|^2)^t}{|1-\bar{a}z|^{s+t}}\mathrm{d}\nu(z)<\infty.
$$
  \item [(ii)] $\nu$ is a vanishing   $\alpha$-logarithmic $s$-Carleson measure if and only if
$$
\lim_{|a|\to 1}\Big(\log\frac{e}{1-|a|^2}\Big)^{\alpha}\int_{\D}\frac{(1-|a|^2)^t}{|1-\bar{a}z|^{s+t}}\mathrm{d}\nu(z)=0.
$$
\end{itemize}
\end{otherl}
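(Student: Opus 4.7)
My plan is to derive both parts of the lemma from quantitative two-sided estimates; part (ii) then follows from the same argument combined with a splitting parameter that depends on $|a|$. I focus on (i).

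For the direction that extracts the Carleson condition from the integral condition, fix an arc $I\subseteq\T$ with centre $\zeta_I$ and test at the point $a=(1-|I|)\zeta_I$. Standard computations give $1-|a|^2\thickapprox|I|$ and $|1-\bar a z|\thickapprox|I|$ for $z\in S(I)$, so restricting the integral to $S(I)$ yields
\[
\Big(\log\frac{e}{1-|a|^2}\Big)^\alpha\int_{\D}\frac{(1-|a|^2)^t}{|1-\bar a z|^{s+t}}d\nu(z)\;\gtrsim\;\Big(\log\frac{e}{|I|}\Big)^\alpha\frac{\nu(S(I))}{|I|^s}.
\]
The finiteness of the supremum on the left therefore produces the desired Carleson bound.

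For the converse direction, I would decompose $\D$ dyadically around $a$. Put $\zeta_a=a/|a|$ and, for $n\geq 0$, let $I_n$ be the arc of $\T$ centred at $\zeta_a$ with $|I_n|=\min(2^n(1-|a|^2),\,1)$. Let $N$ be the least index with $|I_N|=1$, so that $\D$ is covered by $S(I_0)$ and the annular sets $S(I_n)\setminus S(I_{n-1})$ for $1\leq n\leq N$. On each annulus the standard pointwise bound $|1-\bar a z|\gtrsim 2^n(1-|a|^2)$ holds, while the Carleson hypothesis supplies $\nu(S(I_n))\lesssim|I_n|^s[\log(e/|I_n|)]^{-\alpha}$. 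Combining these reduces the integral to a sum of the shape
\[
\sum_{n=0}^{N}\frac{2^{-nt}}{[\log(e/|I_n|)]^\alpha},
\]
and the task is to show that multiplying by $(\log e/(1-|a|^2))^\alpha$ still leaves a bounded expression. Setting $L=\log(e/(1-|a|^2))$ and splitting the sum at $n^\ast\thickapprox L/(2\log 2)$, one checks that for $n\leq n^\ast$ the weights $\log(e/|I_n|)$ and $L$ are comparable, so the ratio is $\lesssim 1$ and the geometric factor $2^{-nt}$ gives convergence; for $n>n^\ast$ the weight in the denominator is only $\gtrsim 1$, but $2^{-n^\ast t}\thickapprox(1-|a|^2)^{t/(2\log 2)}$ beats any polynomial in $L$.

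The main obstacle is precisely this logarithmic bookkeeping in the dyadic sum; the remaining ingredients (Schur-type pointwise estimates and Carleson covering by the $S(I_n)$) are routine. For (ii), the very same decomposition applies, but now the vanishing hypothesis is used on the "small scale" terms where $|I_n|$ is small, while the geometric decay of $2^{-nt}$ together with the non-vanishing Carleson bound handles the scales where $|I_n|$ is not small; choosing the splitting index so that both groups of terms tend to $0$ as $|a|\to 1$ yields the vanishing conclusion.
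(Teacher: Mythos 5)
The paper does not actually prove this lemma: it is quoted as a known result with pointers to Blasco, MacCluer--Zhao, and Pau--Zhao, so there is no in-paper argument to compare against; what you have written is a self-contained version of the standard proof from those references, and it is correct. The necessity direction (testing at $a=(1-|I|)\zeta_I$, using $1-|a|^2\thickapprox|I|$ and $|1-\bar a z|\thickapprox|I|$ on $S(I)$) is fine. For sufficiency, your dyadic decomposition into $S(I_0)$ and the annuli $S(I_n)\setminus S(I_{n-1})$, together with the pointwise bound $|1-\bar a z|\gtrsim 2^{n}(1-|a|^2)$ off $S(I_{n-1})$, does reduce the integral to $\sum_n 2^{-nt}[\log(e/|I_n|)]^{-\alpha}$, and the split at $n^\ast\thickapprox L/(2\log2)$ with $L=\log(e/(1-|a|^2))$ works because $\log(e/|I_n|)=L-n\log2\geq L/2$ for $n\leq n^\ast$ while $\log(e/|I_n|)\geq1$ always. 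Two cosmetic points: $2^{-n^\ast t}=e^{-tL/2}\thickapprox(1-|a|^2)^{t/2}$, not $(1-|a|^2)^{t/(2\log2)}$ --- harmless, since any positive power of $1-|a|^2$ beats $L^\alpha$; and your annuli cover only $\D\setminus\{0\}$, while the case of $|a|$ bounded away from $1$ needs a separate (trivial) word, both easily patched because $\nu$ is finite. Your sketch for (ii) --- vanishing hypothesis on the scales with $|I_n|$ small, the plain Carleson bound plus the geometric tail $\lesssim L^\alpha((1-|a|^2)/\delta)^t\to0$ on the remaining scales, and the same test functions for the converse --- is likewise the right argument. In short, your route is a complete elementary proof of a statement the paper merely cites, which is a reasonable thing to supply.
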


We give some integral  characterizations of logarithmic Carleson measures  on $[0, 1)$  as follows.  Condition (iv) in the proposition below  is also new for the case of $\alpha=0$.

\begin{prop}\label{LemA}
Suppose $0<t<\infty$, $0\leq\alpha<\infty$, $0\leq r<s<\infty$ and $\nu$ is a finite  positive  Borel measure on $[0,1)$. Then the following conditions are equivalent:
\begin{enumerate}
 	\item [(i)] $\nu$ is an  $\alpha$-logarithmic $s$-Carleson measure;
	 \item [(ii)]
\begin{align*}
	\sup_{a\in\D}\int_{[0,1)}\frac{(1-|a|)^t(\log\frac{e}{1-|a|})^{\alpha}}{(1-x)^r(1-|a|x)^{s+t-r}}\mathrm{d}\nu(x)<\infty;
\end{align*}
	 \item [(iii)]
\begin{align*}
	\sup_{a\in\D}\int_{[0,1)}\frac{(1-|a|)^t(\log\frac{e}{1-|a|})^{\alpha}}{(1-x)^r|1-ax|^{s+t-r}}\mathrm{d}\nu(x)<\infty;
\end{align*}
\item[(iv)]	 \begin{align*}
	\sup_{a\in\D}\Big|\int_{[0,1)}\frac{(1-|a|)^t(\log\frac{e}{1-|a|})^{\alpha}}{(1-x)^r(1-ax)^{s+t-r}}\mathrm{d}\nu(x)\Big|<\infty.
\end{align*}
\end{enumerate}
\end{prop}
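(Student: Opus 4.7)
My plan is to close the chain of implications via the cycle $(\mathrm{i})\Rightarrow(\mathrm{ii})\Rightarrow(\mathrm{iii})\Rightarrow(\mathrm{iv})\Rightarrow(\mathrm{ii})\Rightarrow(\mathrm{i})$. Three of the links are essentially free because $\nu$ is supported on the real segment $[0,1)$. For $(\mathrm{ii})\Rightarrow(\mathrm{iii})$, the elementary inequality $|1-ax|\geq 1-|a|x\geq 0$, valid for $x\in[0,1)$ and $a\in\D$, dominates the integrand of (iii) by that of (ii) pointwise. For $(\mathrm{iii})\Rightarrow(\mathrm{iv})$, the triangle inequality together with $|(1-ax)^{s+t-r}|=|1-ax|^{s+t-r}$ suffices. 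For $(\mathrm{iv})\Rightarrow(\mathrm{ii})$, I would restrict the supremum in (iv) to real $a\in[0,1)\subset\D$; then $1-ax$ is real and positive for all $x\in[0,1)$, so the modulus outside the integral becomes trivial and the integrand coincides with that of (ii) at this real $a$. Since the integrand in (ii) depends only on $|a|$, sweeping over this real slice already recovers its supremum.

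For $(\mathrm{ii})\Rightarrow(\mathrm{i})$, I would plug $a=\tau\in[0,1)$ into (ii) and restrict the integration to $[\tau,1)$. On this interval one has $(1-x)^r\leq(1-\tau)^r$ and $1-\tau x\leq 1-\tau^2\leq 2(1-\tau)$, giving a pointwise lower bound on the integrand of order $(\log\tfrac{e}{1-\tau})^{\alpha}/(1-\tau)^s$ uniformly in $x$. This forces the Carleson quotient $(\log\tfrac{e}{1-\tau})^{\alpha}\nu([\tau,1))/(1-\tau)^s$ to be controlled by the supremum in (ii), which is exactly (i).

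The substantive step is $(\mathrm{i})\Rightarrow(\mathrm{ii})$. Here I would apply the distribution-function identity
\begin{equation*}
\int_{[0,1)}h(x)\,d\nu(x)=h(0)\,\nu([0,1))+\int_0^1\nu([x,1))\,h'(x)\,dx
\end{equation*}
to $h(x)=1/[(1-x)^r(1-|a|x)^{s+t-r}]$; the boundary term at $x\to 1^-$ vanishes because the Carleson hypothesis yields $\nu([x,1))\lesssim(1-x)^s(\log\tfrac{e}{1-x})^{-\alpha}$ and $s>r$. Differentiating $h$ produces two terms, and substituting the Carleson bound reduces both contributions to weighted integrals of the form
\begin{equation*}
\int_0^1\frac{(1-x)^A\,(\log\tfrac{e}{1-x})^{-\alpha}}{(1-|a|x)^B}\,dx
\end{equation*}
with $A>-1$ and $B-A=t+1$. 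A logarithmically weighted Forelli--Rudin estimate then yields the bound $(1-|a|)^{-t}(\log\tfrac{e}{1-|a|})^{-\alpha}$, so multiplying the whole expression by $(1-|a|)^t(\log\tfrac{e}{1-|a|})^{\alpha}$ produces the uniform $O(1)$ bound demanded by (ii).

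The main technical obstacle is the logarithmically weighted Forelli--Rudin estimate itself. For $\alpha=0$ it is classical; for $\alpha>0$ it can be proved by splitting the integration at $x=|a|$ and using that $\log\tfrac{e}{1-x}\thickapprox\log\tfrac{e}{1-|a|}$ in the range $x\thickapprox|a|$, where the dominant contribution concentrates. Once this estimate is in hand, all other steps are routine combinations of elementary inequalities with the distribution-function identity.
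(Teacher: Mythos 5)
Your proposal is correct, and the easy links $(\mathrm{ii})\Rightarrow(\mathrm{iii})\Rightarrow(\mathrm{iv})\Rightarrow(\mathrm{ii})$ coincide with the paper's. The two substantive implications, however, are handled by genuinely different arguments. For $(\mathrm{i})\Rightarrow(\mathrm{ii})$ the paper follows \cite{BSW}: after disposing of $|a|\le 1/2$ it introduces the dyadic sets $S_n(a)=\{x\in[0,1):\ 1-2^n(1-|a|)\le x<1\}$, applies the distribution-function formula separately on $S_1(a)$ and on each annulus $S_n(a)\setminus S_{n-1}(a)$, and sums a series of the form $\sum_n n^{\alpha}2^{-nt}$; you instead perform a single global integration by parts and reduce everything to a one-dimensional logarithmically weighted Forelli--Rudin integral with $B-A=t+1$. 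Your route is arguably cleaner and treats $r=0$ and $r>0$ uniformly (the paper splits these cases), at the price of having to prove the weighted Forelli--Rudin estimate; your sketch of that estimate is essentially right, though note that besides the range $x\approx|a|$ you must also absorb the contribution of $x$ bounded away from $1$, where $(\log\frac{e}{1-x})^{-\alpha}$ is merely $O(1)$ --- this works because the power $(1-x)^{A-B}$ with $B-A>1$ still produces a total of order $(1-|a|)^{A-B+1}(\log\frac{e}{1-|a|})^{-\alpha}$ after a dyadic summation near $x=1$. For the converse direction the paper closes the circle through $(\mathrm{iii})\Rightarrow(\mathrm{i})$ by discarding the factor $(1-x)^{-r}\ge 1$ and invoking the known two-dimensional characterization of logarithmic Carleson measures (Lemma~\ref{PZ}); your direct lower bound, obtained by restricting the integral in (ii) to $[\tau,1)$, where $1-\tau x\le 2(1-\tau)$ and $(1-x)^r\le(1-\tau)^r$, is more elementary and makes the proposition self-contained. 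Both approaches are sound; yours buys simplicity and independence from Lemma~\ref{PZ}, while the paper's dyadic decomposition is the one that transfers verbatim from the unweighted case of \cite{BSW}.
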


\begin{proof}
$(i)\Rightarrow (ii)$. We   follow  closely   the proof of    Proposition 2.1 in \cite{BSW}, where the case of $\alpha=0$ was proved. Let $\nu$ be  an  $\alpha$-logarithmic $s$-Carleson measure.    Similar to the arguments in \cite[p. 5]{BSW}, we get
\begin{align*}
\sup_{|a|\leq 1/2}\Big(\log\frac{e}{1-|a|}\Big)^{\alpha}\int_{[0,1)}\frac{(1-|a|)^t}{(1-x)^{r}(1-|a|x)^{s+t-r}}\mathrm{d}\nu(x)  \lesssim1.
\end{align*}
Now consider  $a\in\D$ with $|a|>1/2$ and set
\begin{align*}
	S_n(a)=\{x\in[0,1): 1-2^n(1-|a|)\leq x<1\}
\end{align*}
for every positive integer $n$.
Let  $n_a$ be the minimal integer satisfying
$$1-2^{n_a}(1-|a|)\leq 0. $$
Then $S_n(a)=[0, 1)$ for   $n\geq n_a$.
If   $ x\in S_1(a)$, then
 \begin{equation}\label{301}
1-|a| \leq  1-|a|x.
 \end{equation}
If  $2\leq n\leq  n_a$ and $x\in S_n(a)\backslash S_{n-1}(a)$, then
\begin{equation}\label{302}
1-|a|x \geq |a|-x \geq |a|-(1-2^{n-1}(1-|a|))=(2^{n-1}-1)(1-|a|).
\end{equation}
For convenience, we write
\begin{align*}
& \Big(\log\frac{e}{1-|a|}\Big)^{\alpha}\int_{[0,1)}\frac{(1-|a|)^t}{(1-x)^r(1-|a|x)^{s+t-r}}\mathrm{d}\nu(x)\\
=&\Big(\log\frac{e}{1-|a|}\Big)^{\alpha}\int_{S_1(a)}\frac{(1-|a|)^t}{(1-x)^r(1-|a|x)^{s+t-r}}\mathrm{d}\nu(x)\\
&+\Big(\log\frac{e}{1-|a|}\Big)^{\alpha}\sum^{n_a}_{n=2}\int_{S_n(a)\backslash S_{n-1}(a)}\frac{(1-|a|)^t}{(1-x)^r(1-|a|x)^{s+t-r}}\mathrm{d}\nu(x)\\
=: &J_1(a)+J_2(a).
\end{align*}
If $r=0$,    it is easy to see that   $J_1(a)\lesssim 1$ and $J_2(a)\lesssim 1$.
Next suppose   $0<t<\infty$ and  $0< r<s<\infty$.  Using    (\ref{301}) and  a useful formula of  the distribution function (see \cite[p. 20]{Gar}), we deduce
\small{
\begin{align*}
J_1(a)
\lesssim& \frac{(\log\frac{e}{1-|a|})^{\alpha}}{(1-|a|)^{s-r}}\int_{S_1(a)}\left(\frac{1}{1-x}\right)^r\mathrm{d}\nu(x)\\
\thickapprox& \frac{(\log\frac{e}{1-|a|})^{\alpha}}{(1-|a|)^{s-r}}r \int_0^\infty \lambda^{r-1} \nu(\{x\in [1-2(1-|a|), 1): 1-\frac{1}{\lambda}<x \})\mathrm{d}\lambda\\
\lesssim& \frac{(\log\frac{e}{1-|a|})^{\alpha}}{(1-|a|)^{s-r}} \int_0^{\frac{1}{2(1-|a|)}} \lambda^{r-1} \nu([1-2(1-|a|), 1))\mathrm{d}\lambda\\
&+ \frac{(\log\frac{e}{1-|a|})^{\alpha}}{(1-|a|)^{s-r}} \int_{\frac{1}{2(1-|a|)}} ^\infty \lambda^{r-1} \nu( [1-\frac{1}{\lambda}, 1))\mathrm{d}\lambda\\
\lesssim& (1-|a|)^{r} \int_0^{\frac{1}{2(1-|a|)}} \lambda^{r-1} \mathrm{d}\lambda
+ \frac{(\log\frac{e}{1-|a|})^{\alpha}}{(1-|a|)^{s-r}} \int_{\frac{1}{2(1-|a|)}} ^\infty \lambda^{r-1-s}(\log \lambda)^{-\alpha}\mathrm{d}\lambda\\
\lesssim&1.
\end{align*}}
Bear in mind that   (\ref{302}),  $0<t<\infty$,  $0< r<s<\infty$ and  $\nu$ is an  $\alpha$-logarithmic $s$-Carleson measure.
Then  $J_2(a)$ is not bigger than
\begin{align*}
& \Big(\log\frac{e}{1-|a|}\Big)^{\alpha}\sum^{n_a}_{n=2}\frac{(1-|a|)^{r-s}}{2^{n(s+t-r)}}\int_{S_n(a)\backslash S_{n-1}(a)}\left(\frac{1}{1-x}\right)^r\mathrm{d}\nu(x)\\
\lesssim&\Big(\log\frac{e}{1-|a|}\Big)^{\alpha} \sum^{n_a}_{n=2}\frac{(1-|a|)^{r-s}}{2^{n(s+t-r)}}\\
&\times \int_0^\infty\lambda^{r-1}\nu\big(\big\{x\in[1-2^n(1-|a|),1): 1-\frac{1}{\lambda}<x\big\}\big)\mathrm{d}\lambda\\
\thickapprox& \Big(\log\frac{e}{1-|a|}\Big)^{\alpha}\sum^{n_a}_{n=2}\frac{(1-|a|)^{r-s}}{2^{n(s+t-r)}}\bigg(\int_0^{\frac{1}{2^n(1-|a|)}}\lambda^{r-1}\nu\big([1-2^n(1-|a|),1)\big)\mathrm{d}\lambda\\
&+\int_{\frac{1}{2^n(1-|a|)}}^\infty\lambda^{r-1}\nu\big(\big[1-\frac{1}{\lambda},1\big)\big)\mathrm{d}\lambda\bigg)\\
\lesssim& \Big(\log\frac{e}{1-|a|}\Big)^{\alpha}\sum^{n_a}_{n=2}\frac{(1-|a|)^{r-s}}{2^{n(s+t-r)}}\bigg(\frac{2^{ns}(1-|a|)^s}{(\log\frac{e}{2^n(1-|a|)})^{\alpha}}\int_0^{\frac{1}{2^n(1-|a|)}}\lambda^{r-1}\mathrm{d}\lambda\\
&+\int_{\frac{1}{2^n(1-|a|)}}^\infty\lambda^{r-1-s}(\log \lambda)^{-\alpha}\mathrm{d}\lambda\bigg)\\
\lesssim&  \sum^{n_a}_{n=2} \frac{n^\alpha}{2^{nt}}
<\infty.
\end{align*}
Thus,
$$
\sup_{a\in \D}\Big(\log\frac{e}{1-|a|}\Big)^{\alpha}\int_{[0,1)}\frac{(1-|a|)^t}{(1-x)^r(1-|a|x)^{s+t-r}}\mathrm{d}\nu(x)<\infty.
$$

Note that $s+t-r>0$ and  $|1-ax|\geq 1-|a|x$. Then we get the implication of $(ii)\Rightarrow (iii)$.

$(iii)\Rightarrow (i)$.  For $r\geq 0$ and $a\in \D$, it is clear that
\begin{align*}
&\Big(\log\frac{e}{1-|a|}\Big)^{\alpha}\int_{[0,1)}\frac{(1-|a|)^t}{(1-x)^{r}|1-ax|^{s+t-r}}\mathrm{d}\nu(x)\\
\geq& \Big(\log\frac{e}{1-|a|}\Big)^{\alpha}\int_{[0,1)}\frac{(1-|a|)^t}{|1-ax|^{s+t}}\mathrm{d}\nu(x).
\end{align*}
 Joining  this with Lemma \ref{PZ}, we get the desired result.

The implication of  $(iii)\Rightarrow (iv)$  is clear.

 $(iv)\Rightarrow (ii)$. In is clear that
 \begin{align*}
& \sup_{y\in [0, 1)}\int_{[0,1)}\frac{(1-y)^t(\log\frac{e}{1-y})^{\alpha}}{(1-x)^r(1-yx)^{s+t-r}}\mathrm{d}\nu(x)\\
\leq &\sup_{a\in\D}\Big|\int_{[0,1)}\frac{(1-|a|)^t(\log\frac{e}{1-|a|})^{\alpha}}{(1-x)^r(1-ax)^{s+t-r}}\mathrm{d}\nu(x)\Big|<\infty,
\end{align*}
which gives (ii).  The proof is complete.
\end{proof}

\vspace{0.1truecm}
\noindent {\bf Remark 3.}  The condition of  $\nu$ supported on $[0,1)$  in Proposition \ref{LemA} is also essential. For $p>0$, denote by $\Q^p_{\log}$ the set of
those functions $f$ in $H(\D)$ such that $|f'(w)|^2(1-|w|^2)^p\mathrm{d}A(w)$ is a $2$-logarithmic $p$-Carleson measure (cf. \cite{G, X}). It is known that $\Q^{p_1}_{\log}\subsetneqq \Q^{p_2}_{\log}$ if $0<p_1<p_2<1$.
Suppose  $0<t<1$ and  $0<r<s<1$ such that  $s=r+t$. Set  $\mathrm{d}\mu(w)=|f'(w)|^2(1-|w|^2)^s\mathrm{d}A(w)$, $w\in\D$, where $f\in\Q^s_{\log}\backslash\Q^t_{\log}$. Then $\mathrm{d}\mu$ is a $2$-logarithmic $s$-Carleson measure. Since $f$ is not in $\Q^t_{\log}$, we see
\begin{align*}
	&\sup_{a\in\D}\int_{\D}\frac{(1-|a|)^t(\log\frac{e}{1-|a|})^2}{(1-|w|)^{r}|1-a\overline{w}|^{s+t-r}}\mathrm{d}\mu(w)\\
	=&\sup_{a\in\D}\Big(\log\frac{e}{1-|a|}\Big)^{2}\int_{\D}|f'(w)|^2\frac{(1-|a|)^t(1-|w|)^{t}}{|1-a\overline{w}|^{2t}}\mathrm{d}A(w)\\
	=&+\infty.
\end{align*}

For  vanishing logarithmic Carleson measures  on $[0, 1)$, we also have similar results.
\begin{prop}\label{LemB}
Suppose $0<t<\infty$, $0\leq\alpha<\infty$, $0\leq r<s<\infty$ and $\nu$ is a  finite  positive Borel measure on $[0,1)$. Then the following conditions are equivalent:
\begin{enumerate}
 	\item [(i)] $\nu$ is a vanishing $\alpha$-logarithmic $s$-Carleson measure;
	 \item [(ii)]
\begin{align*}
	\lim_{|a|\to1}\int_{[0,1)}\frac{(1-|a|)^t(\log\frac{e}{1-|a|})^{\alpha}}{(1-x)^r(1-|a|x)^{s+t-r}}\mathrm{d}\nu(x)=0;
\end{align*}
	 \item [(iii)]
\begin{align*}
	\lim_{|a|\to1}\int_{[0,1)}\frac{(1-|a|)^t(\log\frac{e}{1-|a|})^{\alpha}}{(1-x)^r|1-ax|^{s+t-r}}\mathrm{d}\nu(x)=0;
\end{align*}
\item[(iv)]	 \begin{align*}
	\lim_{|a|\to1} \Big|\int_{[0,1)}\frac{(1-|a|)^t(\log\frac{e}{1-|a|})^{\alpha}}{(1-x)^r(1-ax)^{s+t-r}}\mathrm{d}\nu(x)\Big|=0.
\end{align*}
\end{enumerate}
\end{prop}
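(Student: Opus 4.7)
My plan is to mirror the chain of implications used in Proposition \ref{LemA}, namely to prove $(i)\Rightarrow(ii)\Rightarrow(iii)\Rightarrow(i)$ together with $(iii)\Rightarrow(iv)\Rightarrow(ii)$, replacing each boundedness assertion by its vanishing counterpart. The implications among (ii), (iii), (iv) transfer almost verbatim from Proposition \ref{LemA}, and $(iii)\Rightarrow(i)$ can be deduced directly from Lemma \ref{PZ}(ii), so the essential new work is $(i)\Rightarrow(ii)$.

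For $(i)\Rightarrow(ii)$, rather than redoing the dyadic annulus decomposition in the vanishing regime, I would use a localization argument. Given $\varepsilon>0$, the vanishing hypothesis produces $\rho\in(0,1)$ with $\nu([t,1))\le\varepsilon(1-t)^{s}(\log\frac{e}{1-t})^{-\alpha}$ for every $t\in[\rho,1)$. Split
\[
\nu=\nu|_{[0,\rho]}+\nu|_{[\rho,1)}.
\]
The restriction $\nu|_{[\rho,1)}$ is an $\alpha$-logarithmic $s$-Carleson measure whose Carleson constant is $\lesssim\varepsilon$; applying the already-established implication $(i)\Rightarrow(ii)$ of Proposition \ref{LemA} to it, with its constant being linear in the Carleson norm, gives a contribution of size $\lesssim\varepsilon$ uniformly in $a\in\D$. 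On the compactly supported piece $\nu|_{[0,\rho]}$, the denominator $(1-x)^{r}(1-|a|x)^{s+t-r}$ is bounded below by $(1-\rho)^{s+t}$, so the corresponding integral is at most $\nu([0,\rho])(1-\rho)^{-(s+t)}(1-|a|)^{t}(\log\frac{e}{1-|a|})^{\alpha}$, which tends to $0$ as $|a|\to1$. Taking $\limsup_{|a|\to1}$ and then $\varepsilon\to0$ yields (ii).

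For $(ii)\Rightarrow(iii)$ I would use $|1-ax|\ge 1-|a|x$. For $(iii)\Rightarrow(i)$, the inequality $1-x\le 1-|a|x\le|1-ax|$ (valid for $a\in\D$ and $x\in[0,1)$) gives $(1-x)^{r}|1-ax|^{s+t-r}\le|1-ax|^{s+t}$, hence the integrand in (iii) dominates the one appearing in Lemma \ref{PZ}(ii) up to the comparability $(1-|a|)^{t}\asymp(1-|a|^{2})^{t}$, and Lemma \ref{PZ}(ii) delivers (i). The implication $(iii)\Rightarrow(iv)$ is simply the triangle inequality for integrals, while $(iv)\Rightarrow(ii)$ follows by restricting $a$ to real values $y\in[0,1)$: then the integrand in (iv) is nonnegative so the absolute value is redundant, and since the integrand in (ii) depends on $a$ only through $|a|$, this real restriction determines the full limit. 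The main technical point is ensuring that the $\varepsilon$-estimate in the splitting step of $(i)\Rightarrow(ii)$ is genuinely uniform in $a$, which amounts to extracting from the proof of Proposition \ref{LemA} the fact that its constant depends linearly on the Carleson norm of $\nu$; this is already built into that argument.
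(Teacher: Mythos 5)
Your proposal is correct. The paper itself omits the proof, saying only that it follows from Lemma \ref{PZ} together with the proof of Proposition \ref{LemA} in a ``standard'' way --- i.e.\ the intended route is to re-run the dyadic decomposition of $[0,1)$ into the sets $S_n(a)\backslash S_{n-1}(a)$ with the $O$-estimates replaced by $o$-estimates throughout. Your treatment of the peripheral implications $(ii)\Rightarrow(iii)\Rightarrow(iv)\Rightarrow(ii)$ and $(iii)\Rightarrow(i)$ is exactly the one the paper's Proposition \ref{LemA} suggests (monotonicity of the denominators, the triangle inequality, restriction to real $a$, and Lemma \ref{PZ}(ii)), but for the key step $(i)\Rightarrow(ii)$ you take a genuinely different and cleaner route: splitting $\nu$ into a compactly supported part (whose contribution dies because $t>0$ forces $(1-|a|)^t(\log\frac{e}{1-|a|})^\alpha\to 0$) and a tail $\nu|_{[\rho,1)}$ of small Carleson norm, then invoking the \emph{bounded} case with a constant linear in the Carleson norm. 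The two facts this hinges on are both true and worth stating explicitly: the tail restriction satisfies $\nu|_{[\rho,1)}([t,1))\le\varepsilon(1-t)^s(\log\frac{e}{1-t})^{-\alpha}$ for \emph{all} $t\in[0,1)$ (for $t<\rho$ use $\nu|_{[\rho,1)}([t,1))=\nu([\rho,1))$ and the monotonicity of $(1-t)^s$ and $(\log\frac{e}{1-t})^{-\alpha}$), and the estimates of $J_1(a)$ and $J_2(a)$ in the proof of Proposition \ref{LemA} only ever use the hypothesis through $\nu([1-2^n(1-|a|),1))\le \|\nu\|\,(2^n(1-|a|))^s(\log\frac{e}{2^n(1-|a|)})^{-\alpha}$, so the resulting bound is indeed linear in the Carleson norm. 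What your approach buys is that one never has to redo the $\lambda$-distribution-function computations in the vanishing regime; what the paper's implicit approach buys is that it requires no quantitative bookkeeping of how the constant depends on $\nu$. Either is acceptable; yours is arguably the more transparent write-up.
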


\begin{proof}
Based on Lemma \ref{PZ} and the proof of  Proposition \ref{LemA}, the proof of this proposition  is quite  standard. We omit it.
\end{proof}

\section{$\Cu$ operators from  $B_p$ into some spaces of analytic functions}

In this section, for $p>1$ and $s>1$,  applying descriptions of logarithmic Carleson measures  on $[0, 1)$ given in Section 2,  we characterize the boundedness and the compactness of  $\Cu$ operators from  $B_p$ into   a Banach space $X$ between the mean Lipschitz space $\Lambda^s_{1/s}$ and the  Bloch space.

We recall a representation of the operator  $\Cu$ below, which is  Proposition 1 in  \cite{GGM}.

\begin{otherl}\label{inter re}
Suppose $f\in H(\D)$ and $\mu$ is a finite positive Borel measure on $[0, 1)$. Then
$$
\Cu(f)(z)=\int_{[0, 1)} \frac{f(tz)}{1-tz}d\mu(t), \ \ z\in \D.
$$
 \end{otherl}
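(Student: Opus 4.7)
The plan is to prove the identity by expanding both sides as power series in $z$ and matching coefficients, with the interchange of sum and integral justified by Fubini/Tonelli.

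First, write $f(z)=\sum_{k=0}^{\infty}a_k z^k$. For fixed $z\in\D$ and $t\in[0,1)$, both
$$f(tz)=\sum_{k=0}^{\infty}a_k t^k z^k \qquad \text{and} \qquad \frac{1}{1-tz}=\sum_{m=0}^{\infty}t^m z^m$$
converge absolutely, so their Cauchy product gives
$$\frac{f(tz)}{1-tz}=\sum_{n=0}^{\infty}\left(\sum_{k=0}^{n}a_k\right)t^n z^n.$$
Integrating term by term against $d\mu(t)$ and using $\mu_n=\int_{[0,1)}t^n\,d\mu(t)$ produces exactly $\Cu(f)(z)=\sum_{n=0}^{\infty}\mu_n\bigl(\sum_{k=0}^n a_k\bigr)z^n$.

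The only nontrivial point is to justify the termwise integration. Fix $z\in\D$. Since $f\in H(\D)$, the radius of convergence of its Taylor series is at least $1$, so $\sum_{k=0}^\infty|a_k||z|^k<\infty$. Then, uniformly in $t\in[0,1)$,
$$\sum_{k,m\geq 0}|a_k|\,t^{k+m}|z|^{k+m}\leq\sum_{k=0}^{\infty}|a_k||z|^k\sum_{m=0}^{\infty}|z|^m\leq\frac{1}{1-|z|}\sum_{k=0}^{\infty}|a_k||z|^k<\infty.$$
Since $\mu$ is a finite measure on $[0,1)$, Fubini's theorem applies to the measure $\mu\otimes(\text{counting})\otimes(\text{counting})$, and both the interchange of the two sums (to form the Cauchy product) and the interchange of the resulting sum with the integral are legitimate.

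I do not anticipate any serious obstacle; the whole statement is a direct computation, and the only thing to be careful about is ensuring that the dominating sum $\frac{1}{1-|z|}\sum_k|a_k||z|^k$ does not rely on any hypothesis beyond $f\in H(\D)$ and $z\in\D$, which it does not. The identity then holds pointwise on $\D$, as asserted.
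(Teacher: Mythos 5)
Your argument is correct and complete: the Cauchy-product expansion $\frac{f(tz)}{1-tz}=\sum_{n=0}^{\infty}\bigl(\sum_{k=0}^{n}a_k\bigr)t^nz^n$ is right (the powers of $t$ combine to $t^n$ independently of $k$), and your uniform dominating bound $\frac{1}{1-|z|}\sum_{k}|a_k||z|^k$, together with the finiteness of $\mu$, legitimately justifies both interchanges via Fubini--Tonelli. Note that the paper itself gives no proof of this statement: it is quoted as Proposition 1 of the reference by Galanopoulos, Girela, and Merch\'an, where the argument is essentially the same direct power-series computation you give; your write-up is a correct, self-contained substitute.
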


We give the first main result in this section as follows.
\begin{theor}\label{th3.1}
Suppose $1<p<\infty$, $1<s<\infty$,  $X$ is  a Banach space of analytic functions in $\D$ with $\Lambda^s_{1/s}\subseteq X\subseteq\B$,  and  $\mu$ is a finite  positive  Borel measure on $[0,1)$. Let  $q$ be  the real number satisfying $\frac1p+\frac1q=1$. Then the
following conditions are equivalent:
\begin{itemize}
  \item [(i)] $\Cu$ is a bounded operator from $B_p$ into $X$;
  \item [(ii)] $\mu$ is a $\frac1q$-logarithmic 1-Carleson measure.
\end{itemize}
\end{theor}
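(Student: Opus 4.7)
The plan is to prove the two implications separately, exploiting the chain $\Lambda^s_{1/s}\subseteq X\subseteq\B$ to reduce each direction to an extremal case: (ii)$\Rightarrow$(i) will be obtained by bounding $\Cu:B_p\to\Lambda^s_{1/s}$ (the smallest admissible target), while (i)$\Rightarrow$(ii) will be obtained from the weaker hypothesis of boundedness into $\B$ (the largest).

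For the sufficiency direction, Lemma \ref{inter re} gives, after differentiating under the integral,
\begin{equation*}
(\Cu f)'(z) = \int_{[0,1)}\frac{tf'(tz)}{1-tz}\,d\mu(t) + \int_{[0,1)}\frac{tf(tz)}{(1-tz)^2}\,d\mu(t) =: J_1(z) + J_2(z).
\end{equation*}
For $J_1$ I use the Bloch-type bound $(1-|w|^2)|f'(w)|\lesssim\|f\|_{B_p}$ coming from $B_p\hookrightarrow\B$, Minkowski's integral inequality, and the standard circle mean $\bigl(\int_0^{2\pi}|1-\lambda e^{i\theta}|^{-s}d\theta\bigr)^{1/s}\asymp(1-|\lambda|)^{1/s-1}$ to reduce $(1-r)^{1-1/s}M_s(r,J_1)\lesssim\|f\|_{B_p}$ to the ordinary $1$-Carleson estimate of Proposition \ref{LemA}(ii) with $\alpha=0$. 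For $J_2$ I use the Besov growth estimate $|f(w)|\lesssim\|f\|_{B_p}(\log\frac{e}{1-|w|^2})^{1/q}$ (valid since $p>1$), Minkowski, and $\bigl(\int_0^{2\pi}|1-\lambda e^{i\theta}|^{-2s}d\theta\bigr)^{1/s}\asymp(1-|\lambda|)^{1/s-2}$ to obtain
\begin{equation*}
M_s(r,J_2)\lesssim\|f\|_{B_p}\int_{[0,1)}\frac{(\log\frac{e}{1-tr})^{1/q}}{(1-tr)^{2-1/s}}\,d\mu(t).
\end{equation*}
Observing that $1-tr\ge 1-t$ gives $(\log\frac{e}{1-tr})^{1/q}\le(\log\frac{e}{1-t})^{1/q}$, so it suffices to integrate against $d\tilde\mu(t):=(\log\frac{e}{1-t})^{1/q}d\mu(t)$. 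A dyadic layer-cake computation, using the $\frac{1}{q}$-logarithmic $1$-Carleson hypothesis on $\mu$, shows that $\tilde\mu$ is an ordinary $1$-Carleson measure, and Proposition \ref{LemA}(ii) applied to $\tilde\mu$ closes the estimate.

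For the necessity direction, since $X\hookrightarrow\B$ it suffices to exploit $\Cu:B_p\to\B$ boundedness with norm $M$. I test with the family
\begin{equation*}
f_a(z)=\frac{\log\frac{e}{1-\bar a z}}{\bigl(\log\frac{e}{1-|a|^2}\bigr)^{1/p}},\qquad a\in\D.
\end{equation*}
The classical identity $\int_\D(1-|z|^2)^{p-2}|1-\bar a z|^{-p}\,dA\asymp\log\frac{e}{1-|a|^2}$ gives $\|f_a\|_{B_p}\lesssim 1$ uniformly in $a$. For $a=r\in[0,1)$ real the dominant contribution to $(\Cu f_a)'(a)$ comes from integrating $tf_a(ta)(1-ta)^{-2}$ over $t\in[a,1)$, on which $1-at\asymp 1-a^2 t\asymp 1-a$ and $f_a(ta)\asymp(\log\frac{e}{1-a})^{1/q}$, yielding
\begin{equation*}
(\Cu f_a)'(a)\gtrsim\frac{(\log\frac{e}{1-a})^{1/q}\mu([a,1))}{(1-a)^2}.
\end{equation*}
Combining with $(1-a)|(\Cu f_a)'(a)|\le \|\Cu f_a\|_\B\lesssim M\|f_a\|_{B_p}\lesssim M$ yields $\mu([a,1))\lesssim(1-a)/(\log\frac{e}{1-a})^{1/q}$, which is exactly condition (ii).

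The principal subtlety is the choice of test function in the necessity direction. The naive family $(\log\frac{e}{1-\bar a z})^{1/q}$ has $B_p$-norm growing as $(\log\log\frac{e}{1-|a|})^{1/p}$ and produces only the weaker conclusion ``$\beta$-logarithmic $1$-Carleson for every $\beta<1/q$,'' which is exactly the gap present in condition (ii) of Theorem \ref{GGMMTh7}. The Dirichlet-kernel–type choice of $f_a$ above is engineered so that the normalizing factor $(\log\frac{e}{1-|a|^2})^{1/p}$ precisely cancels the growth of the weighted integral $\int(1-|z|^2)^{p-2}|1-\bar a z|^{-p}\,dA$, thereby giving uniformly bounded $B_p$-norm while preserving the sharp logarithmic size $(\log\frac{e}{1-a})^{1/q}$ of $f_a(a)$; this is the key device that closes the gap between conditions (i) and (ii) of Theorem \ref{GGMMTh7}.
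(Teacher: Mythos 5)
Your proof is correct. The sufficiency half coincides with the paper's argument in all essentials: the same differentiation of the representation in Lemma \ref{inter re}, the same splitting into the $f'$ and $f$ terms, Minkowski's inequality, the Bloch bound on $f'$, the growth estimate $|f(z)|\lesssim \|f\|_{B_p}(\log\frac{e}{1-|z|})^{1/q}$, and the standard circle means, all reduced to Proposition \ref{LemA}. The only cosmetic difference is in the $J_2$ term: you use $1-tr\ge 1-t$ to absorb the logarithm into the measure and check that $(\log\frac{e}{1-t})^{1/q}\mathrm{d}\mu(t)$ is an ordinary $1$-Carleson measure, whereas the paper uses $1-tr\ge 1-r$ and invokes Proposition \ref{LemA} directly with $\alpha=\frac1q$, i.e.\ estimate (\ref{bb2}); both are legitimate (and note, as the paper does, that passing from $\Cu(B_p)\subseteq X$ to boundedness into $X$ formally requires the closed graph theorem). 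The necessity half uses the same test functions --- your $f_a$ is the paper's $f_t$ up to normalization --- but a genuinely different extraction of the Carleson condition. The paper expands $\Cu(f_{N/(N+1)})$ in Taylor coefficients, bounds $\mu_N N[\log(N+1)]^{1/q}$ by evaluating the Bloch seminorm at $z=\frac{N}{N+1}$, and then converts the resulting moment decay into the box condition via Proposition \ref{prop un}. You instead evaluate $(\Cu f_a)'(a)$ through the integral representation, use the positivity of both integrands for real $a$ and $t\in[0,1)$ to discard everything except $\int_{[a,1)}t f_a(ta)(1-ta)^{-2}\mathrm{d}\mu(t)$, and read off $\mu([a,1))\lesssim(1-a)(\log\frac{e}{1-a})^{-1/q}$ directly. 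Your route is shorter and bypasses Proposition \ref{prop un} entirely; the paper's coefficient computation has the side benefit of being immediately reusable in the compactness theorem, where the identical chain of inequalities yields the little-$o$ moment estimate. Your closing observation about why the naive family $(\log\frac{e}{1-\bar az})^{1/q}$, whose $B_p$-norm grows like $(\log\log\frac{e}{1-|a|})^{1/p}$, only yields the condition for every $\beta<\frac1q$ is accurate and correctly identifies the source of the gap in Theorem \ref{GGMMTh7}.
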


\begin{proof}
Suppose  $\mu$ is a $\frac1q$-logarithmic 1-Carleson measure. Note that $s>1$. Then Proposition \ref{LemA} yields
\begin{equation}\label{bb1}
\sup_{0<r<1} \int_{[0,1)}\frac{(1-r)^{1-\frac1s}}{(1-tr)^{2-\frac1s}}  \mathrm{d}\mu(t)<\infty,
\end{equation}
and
\begin{equation}\label{bb2}
\sup_{0<r<1}\int_{[0,1)}\frac{(1-r)^{1-\frac1s}\big(\log\frac{2}{1-r}\big)^{\frac1q}}{(1-tr)^{2-\frac1s}}\mathrm{d}\mu(t)<\infty.
\end{equation}
Let  $f\in B_p$. It follows from Lemma \ref{inter re} that
\begin{equation}\label{530}
\Cu (f)'(z)=\int_{[0,1)}\frac{tf'(tz)}{1-tz}\mathrm{d}\mu(t)+ \int_{[0,1)}\frac{tf(tz)}{(1-tz)^2}\mathrm{d}\mu(t), \quad z\in\D.
\end{equation}
By   (\ref{530}) and   the Minkowski inequality,  we obtain
\begin{align}\label{532}
&\sup_{0<r<1}(1-r)^{1-\frac1s}\Big(\int_0^{2\pi}|\Cu (f)'(re^{i\theta})|^s \mathrm{d}\theta\Big)^{\frac1s} \nonumber \\
\lesssim&\sup_{0<r<1}(1-r)^{1-\frac1s}\Big(\int_0^{2\pi}\Big|\int_{[0,1)}\frac{tf'(tre^{i\theta})}{1-tre^{i\theta}}\mathrm{d}\mu(t)\Big|^s \mathrm{d}\theta\Big)^{\frac1s} \nonumber \\
&+\sup_{0<r<1}(1-r)^{1-\frac1s}\Big(\int_0^{2\pi}\Big| \int_{[0,1)}\frac{tf(tre^{i\theta})}{(1-tre^{i\theta})^2}\mathrm{d}\mu(t)\Big|^s \mathrm{d}\theta\Big)^{\frac1s}\nonumber \\
\lesssim&\sup_{0<r<1}(1-r)^{1-\frac1s}\int_{[0,1)}\Big(\int_0^{2\pi}\Big|\frac{tf'(tre^{i\theta})}{1-tre^{i\theta}}\Big|^s\mathrm{d}\theta\Big)^{\frac1s}\mathrm{d}\mu(t)\nonumber \\
&+\sup_{0<r<1}(1-r)^{1-\frac1s}\int_{[0,1)}\Big(\int_0^{2\pi}\Big|\frac{tf(tre^{i\theta})}{(1-tre^{i\theta})^2}\Big|^s \mathrm{d}\theta\Big)^{\frac1s}\mathrm{d}\mu(t).
\end{align}
Since (\ref{bb1}) and  $B_p$ is a subset of the Bloch space $\B$, we deduce
\begin{align}\label{533}
&\sup_{0<r<1}(1-r)^{1-\frac1s}\int_{[0,1)}\Big(\int_0^{2\pi}\Big|\frac{tf'(tre^{i\theta})}{1-tre^{i\theta}}\Big|^s\mathrm{d}\theta\Big)^{\frac1s}\mathrm{d}\mu(t)\nonumber \\
\lesssim&\|f\|_{\B}\sup_{0<r<1}(1-r)^{1-\frac1s}\int_{[0,1)}\frac{1}{1-tr}\Big(\int_0^{2\pi}\frac{1}{|1-tre^{i\theta}|^s}\mathrm{d}\theta\Big)^{\frac1s}\mathrm{d}\mu(t) \nonumber \\
\thickapprox & \|f\|_{\B} \sup_{0<r<1} \int_{[0,1)}\frac{(1-r)^{1-\frac1s}}{(1-tr)^{2-\frac1s}}  \mathrm{d}\mu(t)\lesssim \|f\|_{B_p}.
\end{align}
It is also well known (cf.  \cite{Zhu1}) that
\begin{equation*}\label{531}
|f(z)|\lesssim \|f\|_{B_p} \Big(\log\frac{2}{1-|z|^2}\Big)^{1/q}
\end{equation*}
for all $z\in \D$.
Combining this with   (\ref{bb2}),  we deduce
\small{
\begin{align}\label{534}
&\sup_{0<r<1}(1-r)^{1-\frac1s}\int_{[0,1)}\Big(\int_0^{2\pi}\Big|\frac{tf(tre^{i\theta})}{(1-tre^{i\theta})^2}\Big|^s \mathrm{d}\theta\Big)^{\frac1s}\mathrm{d}\mu(t) \nonumber \\
\lesssim&\|f\|_{B_p}\sup_{0<r<1}(1-r)^{1-\frac1s}\int_{[0,1)}\Big(\log\frac{2}{1-tr}\Big)^{\frac1q}\mathrm{d}\mu(t)\Big(\int_0^{2\pi}\frac{1}{|1-tre^{i\theta}|^{2s}} \mathrm{d}\theta\Big)^{\frac1s} \nonumber \\
\lesssim&\|f\|_{B_p}\sup_{0<r<1}\int_{[0,1)}\frac{(1-r)^{1-\frac1s}\big(\log\frac{2}{1-r}\big)^{\frac1q}}{(1-tr)^{2-\frac1s}}\mathrm{d}\mu(t) \nonumber\\
\lesssim&\|f\|_{B_p}.
\end{align}}
Joining (\ref{532}), (\ref{533}), and  (\ref{534}), we see that $\Cu(f) \in \Lambda^s_{1/s}$ for any $f\in B_p$. Hence  $\Cu(f) \in X$ when $f\in B_p$. The closed  graph theorem yields that
$\Cu$ is a bounded operator from $B_p$ into $X$.

Conversely, suppose (i) hold. Then  $\Cu$ is a bounded operator from $B_p$ into $\B$. For $t\in(1/2,1)$, set
$$
f_t(z)=\Big(\log\frac{e}{1-t}\Big)^{-1/p}\log\frac{1}{1-tz}
$$
for  $z\in\D$.
Clearly,
$$
f_t(z)=\Big(\log\frac{e}{1-t}\Big)^{-1/p}\sum_{k=1}^\infty \frac{1}{k} t^k z^k, \quad z\in\D.
$$
It is known  (cf. \cite[p. 591]{GM}) that
$$
\sup_{t\in(1/2,1)}\|f_t\|_{B_p}\lesssim1.
$$
Then
\begin{align*}
\sup_{t\in(1/2,1)}\|f_t\|_{B_p}
&\geq\|f_{\frac{N}{N+1}}\|_{B_p}\\
&\gtrsim\|\Cu(f_{\frac{N}{N+1}})\|_{\B}\\
&\gtrsim \sup_{z\in\D}\big|\Cu(f_{\frac{N}{N+1}})'(z)\big|(1-|z|).
\end{align*}
for all $N>2$. Consequently,
\begin{align*}
1&\gtrsim[\log (N+1)]^{-\frac1p}\sup_{z\in\D}\Big|\sum^\infty_{n=1}n\mu_n\Big(\sum^n_{k=1}\frac{(\frac{N}{N+1})^k}{k}\Big)z^{n-1}\Big|(1-|z|)\\
&\gtrsim[\log (N+1)]^{-\frac1p}\sum^\infty_{n=1}n\mu_n\Big(\sum^n_{k=1}\frac{(\frac{N}{N+1})^k}{k}\Big)\Big(\frac{N}{N+1}\Big)^{n-1}\frac{1}{N+1}\\
&\gtrsim\frac{1}{(N+1)[\log (N+1)]^{\frac1p}}\sum^\infty_{n=1}n\mu_n\Big(\sum^n_{k=1}\frac{1}{k}\Big)\Big(\frac{N}{N+1}\Big)^{2n-1}\\
&\gtrsim\frac{1}{(N+1)[\log (N+1)]^{\frac1p}}\sum^\infty_{n=1}n\mu_n\log(n+1)\Big(\frac{N}{N+1}\Big)^{2n-1}\\
&\gtrsim\frac{1}{(N+1)[\log (N+1)]^{\frac1p}}\sum^N_{n=1}n\mu_n\log(n+1)\Big(\frac{N}{N+1}\Big)^{2n-1}\\
&\gtrsim\frac{1}{(N+1)[\log (N+1)]^{\frac1p}}\mu_N\sum^N_{n=1}n\log(n+1)\Big(\frac{N}{N+1}\Big)^{2n-1}\\
&\gtrsim\frac{1}{(N+1)[\log (N+1)]^{\frac1p}}\mu_N\sum^N_{n=[\frac{N+1}{2}]}n\log(n+1)\Big(\frac{N}{N+1}\Big)^{2n-1}\\
&\gtrsim\frac{1}{(N+1)[\log (N+1)]^{\frac1p}}\mu_N N^2\log(N+1)\\
&\thickapprox\mu_N N[\log(N+1)]^{\frac1q}
\end{align*}
for all  $N>2$. Combining this with Proposition \ref{prop un},
we get that  $\mu$ is a $\frac1q$-logarithmic 1-Carleson measure. The proof is complete.
\end{proof}

Next we consider the  compactness of  $\Cu$ operators from  $B_p$ into a Banach space $X$ between the mean Lipschitz space $\Lambda^s_{1/s}$ and the  Bloch space.

\begin{theor}
Suppose $1<p<\infty$, $1<s<\infty$,  $X$ is  a Banach space of analytic functions in $\D$ with $\Lambda^s_{1/s}\subseteq X\subseteq\B$,  and  $\mu$ is a finite  positive  Borel measure on $[0,1)$. Let  $q$ be  the real number satisfying $\frac1p+\frac1q=1$. Then the
following conditions are equivalent:
\begin{itemize}
  \item [(i)] $\Cu$ is a compact  operator from $B_p$ into $X$;
  \item [(ii)] $\mu$ is a vanishing  $\frac1q$-logarithmic 1-Carleson measure.
\end{itemize}
\end{theor}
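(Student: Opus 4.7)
The plan is to mirror the structure of the proof of Theorem \ref{th3.1}, replacing each appearance of Proposition \ref{LemA} with its vanishing counterpart Proposition \ref{LemB}, and replacing the closed graph argument with the standard ``normal family'' criterion for compactness.

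For the implication $(ii) \Rightarrow (i)$, I would use the fact that a bounded linear operator $T: B_p \to X$ is compact if and only if $\|T f_n\|_X \to 0$ whenever $\{f_n\} \subset B_p$ is bounded and $f_n \to 0$ uniformly on compact subsets of $\D$. Since the inclusion $\Lambda^s_{1/s} \hookrightarrow X$ is continuous, it suffices to prove $\|\Cu f_n\|_{\Lambda^s_{1/s}} \to 0$. Given a bounded sequence $\{f_n\}$ in $B_p$ with $f_n \to 0$ locally uniformly, I would estimate $(1-r)^{1-1/s} M_s(r, (\Cu f_n)')$ by splitting the supremum over $r$ into two ranges. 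For $r \in [r_0, 1)$ with $r_0$ close to $1$, the vanishing Carleson hypothesis together with Proposition \ref{LemB} makes both
\[
\sup_{r_0 \leq r < 1} \int_{[0,1)} \frac{(1-r)^{1-1/s}}{(1-tr)^{2-1/s}} \mathrm{d}\mu(t) \quad \text{and} \quad \sup_{r_0 \leq r < 1} \int_{[0,1)} \frac{(1-r)^{1-1/s}\bigl(\log\frac{e}{1-r}\bigr)^{1/q}}{(1-tr)^{2-1/s}} \mathrm{d}\mu(t)
\]
arbitrarily small, and then the Minkowski-inequality argument of Theorem \ref{th3.1} (applied to the representation (\ref{530})) controls this range by $\varepsilon \, \|f_n\|_{B_p}$. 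For $r \in [0, r_0]$, the integral representation in Lemma \ref{inter re} together with Cauchy estimates give that $(\Cu f_n)'$ converges to $0$ uniformly on $\{|z| \leq r_0\}$, because $f_n$ and $f_n'$ do so on the compact set $\{|w| \leq r_0\}$ and the kernels $\frac{t}{1-tz}$, $\frac{t}{(1-tz)^2}$ are bounded there. Combined with $|(\Cu f_n)(0)| = \mu_0 |f_n(0)| \to 0$, this yields the desired convergence in the $\Lambda^s_{1/s}$-norm.

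For the implication $(i) \Rightarrow (ii)$, I would exploit the continuous inclusion $X \hookrightarrow \B$ to reduce to compactness of $\Cu: B_p \to \B$, and then test on the same family
\[
f_t(z) = \Big(\log\frac{e}{1-t}\Big)^{-1/p} \log\frac{1}{1-tz}, \qquad t \in (1/2, 1),
\]
used in the proof of Theorem \ref{th3.1}. This family is uniformly bounded in $B_p$ and converges to $0$ uniformly on compact subsets of $\D$ as $t \to 1^-$, since the prefactor tends to $0$ while $\log\frac{1}{1-tz}$ stays bounded on each compact set. Compactness then forces $\|\Cu f_t\|_\B \to 0$ as $t \to 1^-$, and substituting $t = N/(N+1)$ and repeating verbatim the chain of lower bounds from Theorem \ref{th3.1} gives $\mu_N\, N\, [\log(N+1)]^{1/q} \to 0$. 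An application of Proposition \ref{prop un}(ii) then identifies $\mu$ as a vanishing $\frac{1}{q}$-logarithmic $1$-Carleson measure.

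The main technical obstacle I expect is the sufficiency direction: I have to ensure that the two-term decomposition from Theorem \ref{th3.1} passes from a uniform bound to a limit, which requires splitting the supremum in $r$ and verifying that the ``compact'' part $r \in [0, r_0]$ is handled by local uniform convergence, not by the Carleson condition. The necessity direction, by contrast, is essentially a re-reading of the computation already carried out in Theorem \ref{th3.1}, combined with the standard compactness characterization.
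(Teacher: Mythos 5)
Your proposal is correct and follows essentially the same route as the paper: the sufficiency direction splits the supremum in $r$ at a cutoff $r_0$ (the paper's $\delta$), handling the outer range via Proposition \ref{LemB} and the inner range via local uniform convergence of $f_n$ and $f_n'$, and the necessity direction reuses the test family $f_t$ and the lower-bound chain from Theorem \ref{th3.1} together with Proposition \ref{prop un}(ii). No substantive differences from the paper's argument.
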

\begin{proof}
$(ii)\Rightarrow (i)$.  \
Suppose  $\mu$ is a vanishing  $\frac1q$-logarithmic 1-Carleson measure.  Let $\{f_m\}^\infty_{m=1}$ be a sequence in $B_p$ such that $\sup_m\|f_m\|_{B_p}<\infty$ and $\{f_m\}$ tends to $0$ uniformly in compact subsets of $\D$ as $m\to\infty$.   It suffices to
prove that   $\|\Cu(f_m)\|_{\Lambda^s_{1/s}}\to0$ ad $m\to \infty$.

 For any $\epsilon>0$, it is known from Proposition \ref{LemB} that  there exists a constant
$\delta$ in $(0, 1)$ such that
\begin{equation}\label{541}
\int_{[0,1)}\frac{(1-r)^{1-\frac1s}\big(\log\frac{1}{1-r}\big)^{\frac1q}}{(1-tr)^{2-\frac1s}}\mathrm{d}\mu(t)<\epsilon
\end{equation}
for all $r$ in $(\delta, 1)$. From   the proof of Theorem \ref{th3.1},  $\sup_m\|f_m\|_{B_p}<\infty$, and   (\ref{541}),   we obtain
\begin{align}\label{542}
&(1-r)^{1-\frac1s}\Big(\int_0^{2\pi}|\Cu (f_m)'(re^{i\theta})|^s \mathrm{d}\theta\Big)^{\frac1s} \nonumber \\
\lesssim&\|f_m\|_{B_p}\int_{[0,1)}\frac{(1-r)^{1-\frac1s}\big(\log\frac{1}{1-r}\big)^{\frac1q}}{(1-tr)^{2-\frac1s}}\mathrm{d}\mu(t)\nonumber \\
\lesssim&\ \epsilon
\end{align}
for all $r$ in $(\delta, 1)$.

Since both $\{f_m\}$ and $\{f'_m\}$ tend to $0$ uniformly in $\D_\delta=\{z\in \D: |z|\leq \delta\}$,  there exists a positive integer $m_0$ such that
$$
\sup_{z\in\D_\delta }|f_m(z)|<\epsilon, \ \text{and}\ \ \sup_{z\in\D_\delta }|f'_m(z)|<\epsilon
$$
when $m>m_0$.  Note that $\mu$ is finite and $\mu$ is also a  $\frac1q$-logarithmic 1-Carleson measure. Then
$$
\sup_{r\in [0, 1)}\left(\int_{[0,1)} \frac{(1-r)^{1-\frac1s}}{(1-tr)^{1-\frac1s}} \mathrm{d}\mu(t)+\int_{[0,1)} \frac{(1-r)^{1-\frac1s}}{(1-tr)^{2-\frac1s}} \mathrm{d}\mu(t)\right)<\infty.
$$
Consequently, for $m>m_0$, we get
\begin{align}\label{543}
&(1-r)^{1-\frac1s}\Big(\int_0^{2\pi}|\Cu (f_m)'(re^{i\theta})|^s \mathrm{d}\theta\Big)^{\frac1s}\nonumber \\
\lesssim& (1-r)^{1-\frac1s}\int_{[0,1)}\Big(\int_0^{2\pi}\Big|\frac{tf_m'(tre^{i\theta})}{1-tre^{i\theta}}\Big|^s\mathrm{d}\theta\Big)^{\frac1s} \mathrm{d}\mu(t) \nonumber \\
& +(1-r)^{1-\frac1s}\int_{[0,1)}\Big(\int_0^{2\pi}\Big|\frac{tf_m(tre^{i\theta})}{(1-tre^{i\theta})^2}\Big|^s \mathrm{d}\theta\Big)^{\frac1s} \mathrm{d}\mu(t)\nonumber  \\
\lesssim& \epsilon \left(\int_{[0,1)} \frac{(1-r)^{1-\frac1s}}{(1-tr)^{1-\frac1s}} \mathrm{d}\mu(t)+\int_{[0,1)} \frac{(1-r)^{1-\frac1s}}{(1-tr)^{2-\frac1s}} \mathrm{d}\mu(t)\right)\nonumber  \\
\lesssim& \epsilon
\end{align}
for all $0\leq r\leq \delta$.  Clearly, $\lim_{m\to \infty}\Cu(f_m)(0)=0$. From this, (\ref{542}), and  (\ref{543}), there exists a positive integer $m_0$ such that
$\|\Cu(f_m)\|_{\Lambda^s_{1/s}}\lesssim \epsilon$ when $m>m_0$.   Note that $X$ is  a Banach space of analytic functions in $\D$ with $\Lambda^s_{1/s}\subseteq X\subseteq\B$.
Thus $\Cu$ is a compact  operator from $B_p$ into $X$.

$(i)\Rightarrow (ii)$.\  Suppose (i) hold. Then  $\Cu$ is a compact operator from $B_p$ into $\B$. For $1/2<t<1$, set
$$f_t(z)=\Big(\log\frac{e}{1-t}\Big)^{-1/p}\log\frac{1}{1-tz}, \quad z\in\D.$$
Then
$$
\sup_{1/2<t<1}\|f_t\|_{B_p}\lesssim 1.
$$
Clearly,  functions  $f_t$ tends to 0 uniformly in compact subsets of $\D$ as $t\to1^-$.
Then
\begin{equation}\label{5414}
\|\Cu(f_t)\|_{\B}\to0, ~\text{as}~ t\to 1^-.
\end{equation}
Taking $t=\frac{N}{N+1}$ for $N>2$. By  the proof of Theorem \ref{th3.1}, we get
$$\|\Cu(f_{\frac{N}{N+1}})\|_{\B}\gtrsim\mu_N N[\log(N+1)]^{\frac1q}$$
for all $N>2$.
Combining this with (\ref{5414}), we obtain
$$\mu_N=o\Big(\frac{1}{N[\log(N+1)]^{1/q}}\Big).$$
Then it follows from Proposition \ref{prop un} that   $\mu$ is a vanishing $\frac1q$-logarithmic 1-Carleson measure.  The proof is complete.
\end{proof}

Suppose $s>1$ and $p>1$.  It is easy to check that the function
$\log(1-z)$ is in $\Lambda^s_{1/s}$, but it does not belong to $B_p$ for $p>1$. Thus  $\Lambda^s_{1/s}\nsubseteq B_p$. It remains open to characterize
the boundedness and the compactness of  $\Cu$ operators from  $B_p$ to $B_p$ (cf. \cite[Section 4]{GGMM}).

\end{document}